\newtheorem{theorem}{Theorem}
\newtheorem{lemma}{Lemma}
\newtheorem{zmch*}{Remark}
\newtheorem{slds}{Corollary}
\newcommand{\eps}{\varepsilon}
\renewcommand{\l}{\lambda}
\renewcommand{\r}{\rho}
\newcommand{\Z}{\mathbb{Z}}
\newcommand{\R}{\mathbb{R}}
\title{A formula for the HOMFLY polynomial of rational links}
\author{S.\,Duzhin\thanks{Supported by grants RFBR 08-01-00379 and
NSh-8462.2010.1.}, M.\,Shkolnikov}
\date{}
\begin{document}
\maketitle
\begin{abstract}
In this paper we give an explicit formula for the HOMFLY polynomial of a
rational link (in particular, knot) in terms of a special continued
fraction for the rational number that defines the given link.
\end{abstract}

\section{Rational links}
\label{rat_links}

Rational (or 2-bridge) knots and links constitute an important class of links
for which many problems of knot theory can be completely solved and
provide examples often leading to general theorems about
arbitrary knots and links.
For the basics on rational (2-bridge) knots and links we refer the reader
to \cite{Lik} and \cite{KM}. As regards the definition, we follow \cite{Lik},
while the majority of properties that we need, are to be found in a more
detailed exposition of \cite{KM}. In particular, by \textit{equivalence}
of (oriented) links $L=K_1\cup K_2$ and $L'=K_1'\cup K_2'$
we understand a smooth isotopy of $\R^3$ which takes the union $K_1\cup K_2$
into the union $K_1\cup K_2$, possibly interchanging the components of the
link.\footnote{In fact \cite{HM}, all rational links are 
\textit{interchangeable}, that is, there is an isotopy of a rational link
onto itself that interchanges the components.}

Let $p$ and $q$ be mutually prime integers, $p>0$, $|{p\over q}|\leq 1$, 
and we have a continued fraction
\begin{equation}
{p \over q}=
\cfrac{1}{
b_1+\cfrac{1}{
b_2+\cfrac{1}{
\dots+\cfrac{1}{
b_{n-1}+\cfrac{1}{b_n}}}}}\ ,
\end{equation} 
where $b_i$ are nonzero integers (positive or negative).
Below, we will use shorthand notation\label{short_cf} $[b_1,b_2,\dots,b_n]$
for the continued fraction with denominators $b_1,b_2,\dots,b_n$.
A theorem of Schubert (see, for instance, \cite{Lik,KM}) says that the 
(isotopy type of the) resulting unoriented link does not depend on the choice 
of the continued fraction for the given number $p/q$.

The case $p=q=1$ is exceptional: it corresponds to the
trivial knot which is the only rational, but not 2-bridge knot.
On some occasions, it will be helpful to allow the numbers $b_i$ also take
values $0$ and $\infty$ subject to the rules $1/0=\infty$, $1/\infty=0$,
$\infty+x=\infty$. 

Consider a braid on four strands corresponding to the word
$A^{b_1}B^{b_2}A^{b_3}\dots$, where $A$ and $B$ are fragments
depicted in Figure \ref{elements} and concatenated from left to right.

\begin{figure}[h]
$$\begin{array}{ccccccc}
\includegraphics[height=15mm]{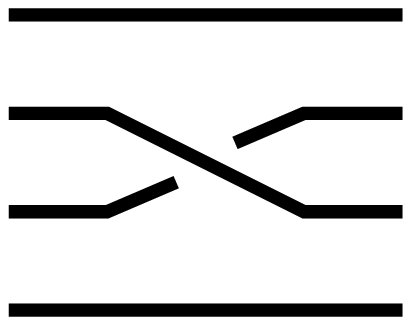} 
&\qquad&
\includegraphics[height=15mm]{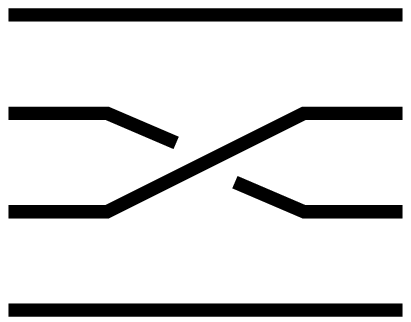} 
&\qquad&
\includegraphics[height=15mm]{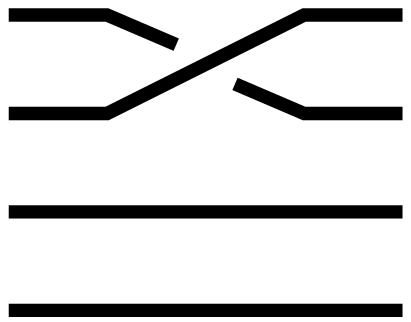} 
&\qquad&
\includegraphics[height=15mm]{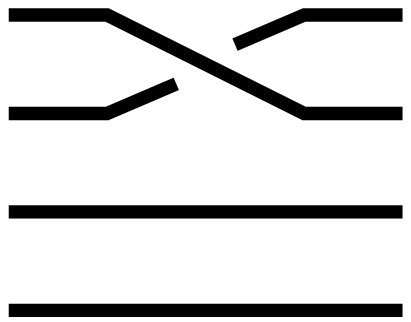} \\
A &\qquad& A^{-1} &\qquad& B &\qquad& B^{-1}
\end{array}$$
\caption{Fragments of natural diagrams}
\label{elements}
\end{figure}

Then take the closure of this braid depending on the parity
of $n$ (see Fig. \ref{closure}).

\begin{figure}[h]
\begin{center}
\begin{minipage}[h]{0.49\linewidth}
\center{\includegraphics[width=0.5\linewidth]{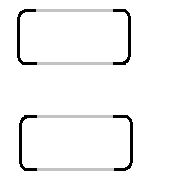} \\ $n\equiv 1\mod 2$}
\end{minipage}
\begin{minipage}[h]{0.49\linewidth}
\center{\includegraphics[width=0.5\linewidth]{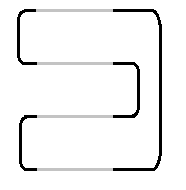}  \\ $n\equiv 0\mod 2$}
\end{minipage}
\caption{Odd and even closure}
\label{closure}
\end{center}
\end{figure}

We will call (non-oriented) diagrams obtained in this way 
\textit{natural diagrams of rational links} and denote them by 
$D[b_1,b_2,\dots,b_n]$.
We shall denote the link represented by this diagram as 
$L({p\over q})$.  For odd denominators $L({p\over q})$
turns out to be a knot, while for even denominators it is a two-component
link. Such knots and links are called \textit{2-bridge} or
\textit{rational}.
\medskip

\textbf{Example.}
We have, among others, the following two continued fractions for the rational
number 4/7 (we use shorthand notation, see page \pageref{short_cf}):
$$
  {4\over 7} = [1,1,3] = [2,-4].
$$
These fractions correspond to the natural link diagrams shown in Figure
\ref{nat_diag}.

\begin{figure}[ht]
\begin{center}
\includegraphics[height=14mm]{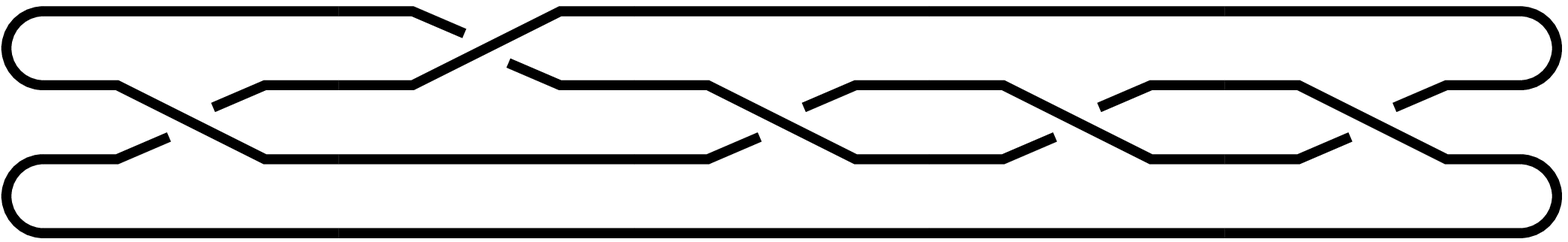}\vspace{3mm}
\includegraphics[height=14mm]{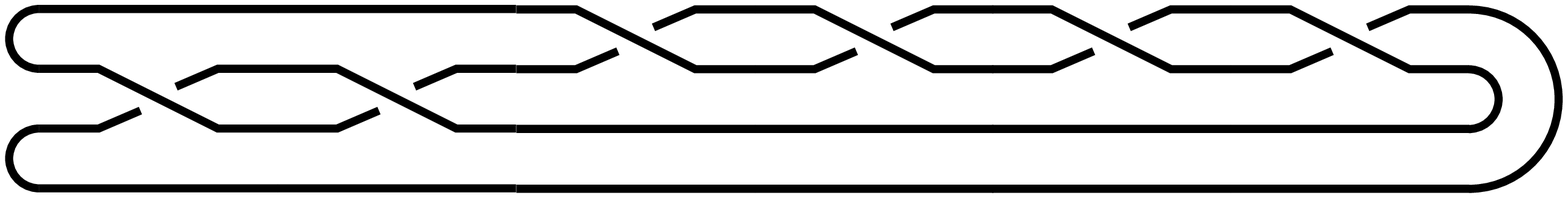}
\caption{Two natural diagrams of the table knot $5_2$}
\label{nat_diag}
\end{center}
\end{figure}

\section{Orientations}

Note that, if a natural diagram represents a two-component link, then the
two vertical leftmost fragments belong to different components.  If they are
oriented in the same direction, as shown in Figure \ref{pos_orient}, then we
call the diagram \textit{positive} and denote it by
$D^+[b_1,b_2,\dots,b_n]$.  \begin{figure}[h]
\begin{center}
\begin{minipage}[h]{0.49\linewidth}
\center{\includegraphics[width=0.5\linewidth]{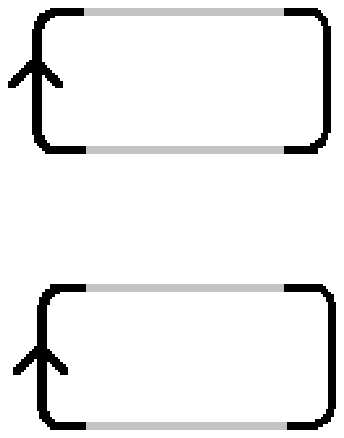} \\ $n\equiv1 \mod  2)$}
\end{minipage}
\begin{minipage}[h]{0.49\linewidth}
\center{\includegraphics[width=0.5\linewidth]{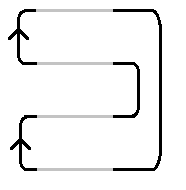}  \\ $n\equiv0 \mod  2)$}
\end{minipage}
\caption{Positive orientation on a 2-component rational link}
\label{pos_orient}
\end{center}
\end{figure}

If the orientation of one of the components is
reversed, then we call it \textit{negative} and denote by
$D^-[b_1,b_2,\dots,b_n]$. 
It does not matter which component of the link is
reversed, because the change of orientation of both components yields the
same link, see \cite{KM}.
As we will see later (Lemmas \ref{plusminus} and \ref{dual}), the
choice between the corresponding links does not depend on a particular
continued fraction expansion of the number $p/q$.  This makes the notations
$L^+(p/q)$ and $L^-(p/q)$ well-defined.

Let $p'=p-q$, if $p>0$, and $p'=p+q$, if $p<0$. According to \cite{KM}, we
have: $L^-({p\over q})=L^+({p'\over q})$, therefore, in principle, it is
sufficient to study only the totality of all positive rational links.  In
the case of knots (when $q$ is odd), the two oppositely oriented knots are
isotopic, and we have $L({p\over q})=L({p'\over q})$\label{chnge_knot}
(again, see \cite{KM}).  Therefore, it is sufficient to study only the
knots with an even numerator (cf.  Lemma \ref{even_prod} below).

Another important operation on links is the reflection in space;
it corresponds to the change of sign of the corresponding rational number:
$p/q\mapsto-p/q$, see \cite{KM}. 

The two symmetry operations on rational links generate a group
$\Z_2\times\Z_2$; they are transparently exemplified
by the examples $p/q=1/4, -1/4, 3/4, -3/4$, which correspond to the four
versions of the so called \textit{Solomon knot} (although it is actually a
two-component link):

\begin{align*}
L^+(1/4)\leftrightarrow D^+[4]&=\raisebox{-7mm}{\includegraphics[height=14mm]{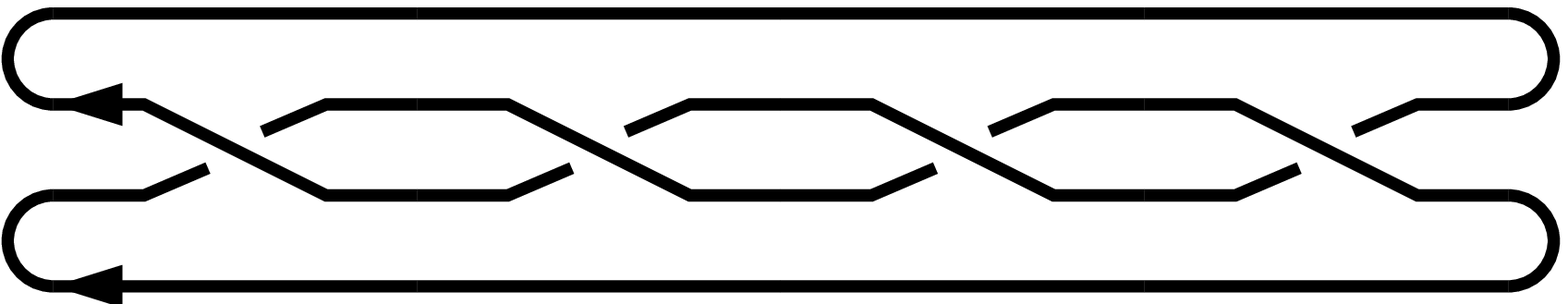}}\\[2mm]
L(-1/4)\leftrightarrow D^+[-4]&=\raisebox{-7mm}{\includegraphics[height=14mm]{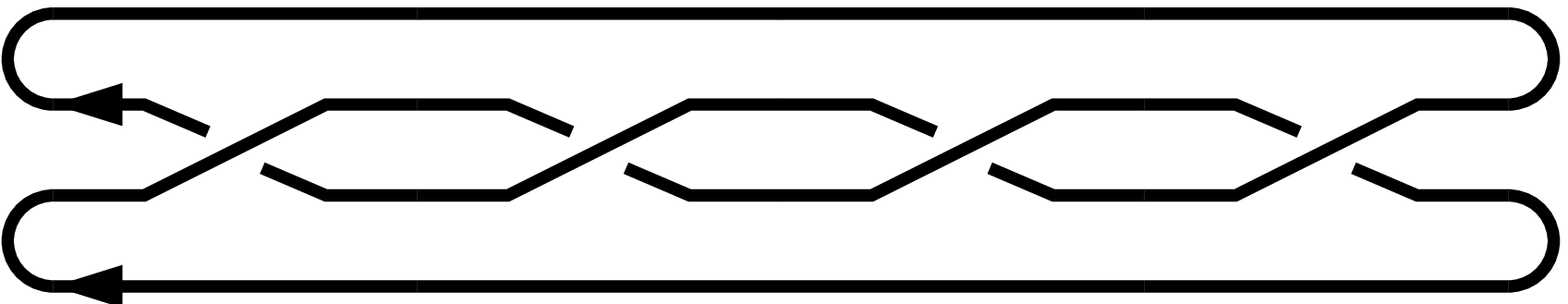}}\\[2mm]
L(3/4)\leftrightarrow D^+[1,3]&=\raisebox{-7mm}{\includegraphics[height=14mm]{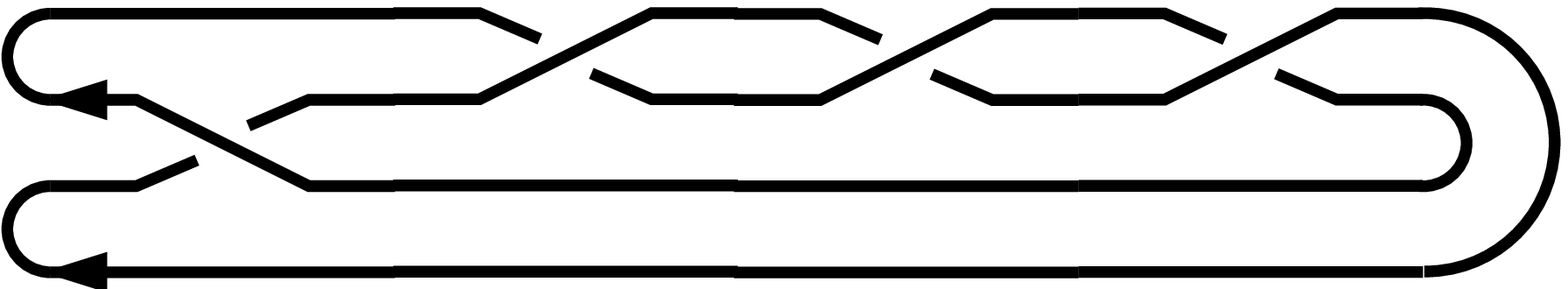}}\\[2mm]
L(-3/4)\leftrightarrow D^+[-1,-3]&=\raisebox{-7mm}{\includegraphics[height=14mm]{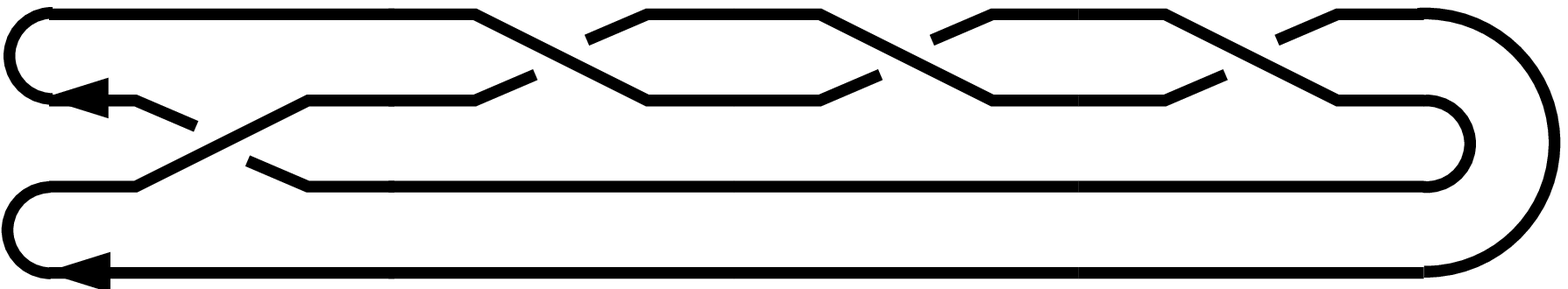}}
\end{align*}

By dragging the lower strand of the diagram for $L(3/4)$ upwards we get 
the diagram for $L(-1/4)$ with the opposite orientation of the upper strand.
The same is true for the pair $L(-3/4)$ and $L(1/4)$.

\section{HOMFLY polynomial}
\label{HOMFLY}

In 2004--2005 Japanese mathematicians S.\,Fukuhara \cite{Fuk}
and Y.\,Mizuma \cite{Miz} found
independently different explicit formulae for the simplest invariant
polynomial of 2-bridge links: the Conway (Alexander) polynomial.
The aim of the present paper is to establish a formula for a more general
HOMFLY polynomial $P$ in terms of the number $p/q$ that defines the
rational link.

The HOMFLY polynomial \cite{Lik, PS, CDM} is a Laurent polynomial in two
variables $a$ and $z$ uniquely defined by the following relations (we use
the normalization of \cite{Atlas} and \cite{CDM}; other authors may write 
the same polynomial in different pairs of variables, for example, Lickorish
\cite{Lik} uses $l=\sqrt{-1}a$ and $m=-\sqrt{-1}z$):

\begin{equation}
\label{skein}
P(\bigcirc)=1,\ aP(L_+)-a^{-1}P(L_-)=zP(L_0),
\end{equation}
where $L_+$, $L_-$ and $L_0$ are links that differ inside a certain ball
as shown in Figure \ref{homfly_fig}.

\begin{figure}[h]
\begin{center}
\begin{minipage}[h]{0.25\linewidth}
\center{\includegraphics[width=15mm]{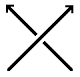} \\ $L_+$}
\end{minipage} \qquad
\begin{minipage}[h]{0.25\linewidth}
\center{\includegraphics[width=15mm]{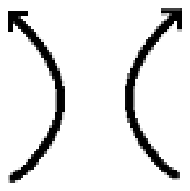} \\ $L_0$}
\end{minipage} \qquad
\begin{minipage}[h]{0.25\linewidth}
\center{\includegraphics[width=15mm]{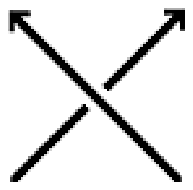} \\ $L_-$}
\end{minipage}
\caption{Outside of these regions the three links coincide}
\label{homfly_fig}
\end{center}
\end{figure}

As we mentioned in the previous section, in the case of rational knots, the
change of orientation gives the same (isotopic) knot, while for links it is
important to distinguish between the two essentially different orientations
(this number is two, not four, because the change of orientation on both
components gives the same rational link).

There is a simple formula relating the HOMFLY polynomials of a knot (link)
with that of its mirror reflection ($a\mapsto-a^{-1}$, $z\mapsto z$),
so in principle it is enough to study only the knots (links) described by
positive fractions.

HOMFLY polynomials of some links are given below in Figure
\ref{homfly_torus} and Table \ref{homflytab}.

\section{Reduction formula}
\label{reduc}

Consider a family of links $L_n$ for $n$ even, 
which coincide everywhere but in a certain ball, where they
look as shown in Fig. \ref{counter}.a, \ref{counter}.b and \ref{counter}.c. 
Moreover, we define the link $L_\infty$ by Fig. \ref{counter}.d.
That is, we consider a family of links with a distinguished block
where the strands are counter-directed. A formula similar to what we are
going to prove, can also be established for co-directed strands, but for our
purposes the following Proposition is sufficient. It expresses the value 
$P(L_n)$ through $P(L_0)$ and $P(L_\infty)$.

\begin{figure}[h]
\begin{minipage}[h]{0.45 \linewidth}
\center{\includegraphics[width=1\linewidth]{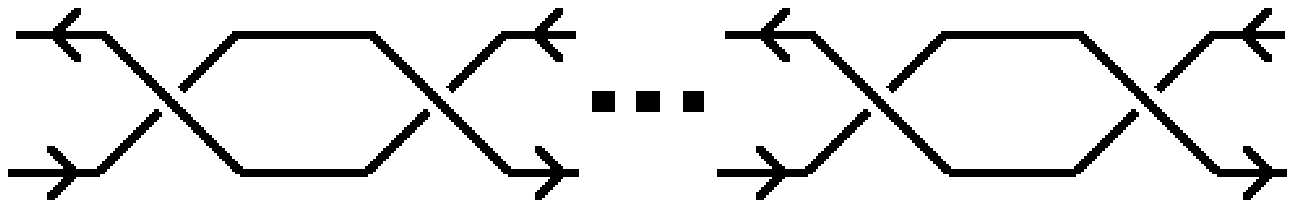}} a)\ $n>0$ \\
\end{minipage}
\hfill
\begin{minipage}[h]{0.45\linewidth}
\center{\includegraphics[width=1\linewidth]{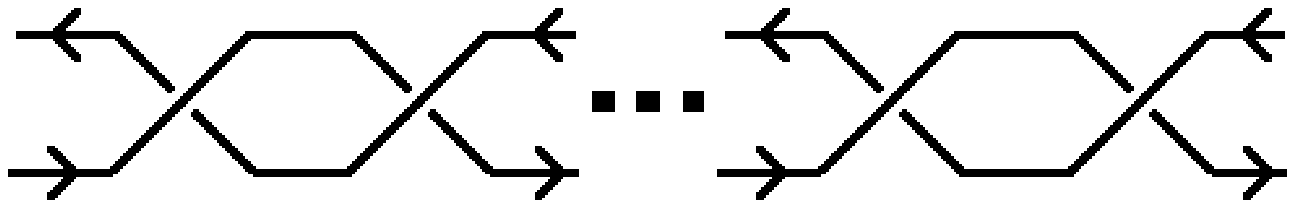}} b)\ $n<0$ \\
\end{minipage}
\vfill
\begin{minipage}[h]{0.45\linewidth}
\center{\includegraphics[width=1\linewidth]{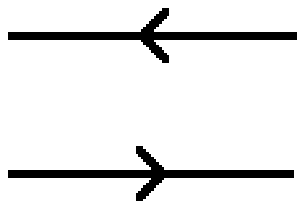}} c)\ $n=0$ \\
\end{minipage}
\hfill
\begin{minipage}[h]{0.45\linewidth}
\center{\includegraphics[width=1\linewidth]{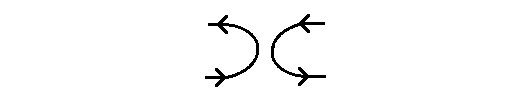}} d)\ $n=\infty$ \\
\end{minipage}
\caption{The differing portions of the links $L_n$.
In the first two pictures the elementary fragment is repeated
$|n|\over 2$ times.}\label{even_fragm}
\label{counter}
\end{figure}

\begin{lemma} \label{P_of_Ln}
$$
  P(L_n)=a^nP(L_0)+z{{1-a^n}\over{a-a^{-1}}}P(L_\infty)\ .
$$
\end{lemma}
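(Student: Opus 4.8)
The plan is to prove this by induction on $|n|$ (even $n$), using the HOMFLY skein relation applied at one crossing in the distinguished block.

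The plan is to prove the formula by induction on $|n|$, running over even integers, by applying the HOMFLY skein relation~\eqref{skein} to a single crossing inside the distinguished block. The base case $n=0$ is immediate, since the right-hand side becomes $a^{0}P(L_0)+z\cdot\frac{1-1}{a-a^{-1}}\,P(L_\infty)=P(L_0)$.

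For the inductive step, take $n>0$ and single out one crossing of the elementary fragment of Fig.~\ref{counter}.a; together with its two resolutions it forms a skein triple. Because the two strands of the block are counter-directed, the oriented (Seifert) smoothing of this crossing joins them by a cap and a cup; the crossings of the block lying above and below this turnback then become kinks and are removed by Reidemeister~I moves, so the smoothed diagram is isotopic to $L_\infty$. Switching the chosen crossing instead creates a crossing of the opposite sign next to its neighbour in the block, and this pair cancels by Reidemeister~II, leaving the block with $|n|-2$ crossings, i.e. the diagram $L_{n-2}$. Substituting these identifications into~\eqref{skein} (with the crossing sign as fixed by the picture) yields a one-step recursion of the shape
\begin{equation*}
P(L_n)=a^{2}P(L_{n-2})-az\,P(L_\infty)\ ,
\end{equation*}
and, symmetrically, working with Fig.~\ref{counter}.b for $n<0$ gives $P(L_n)=a^{-2}P(L_{n+2})+a^{-1}z\,P(L_\infty)$.

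It then remains to verify that the asserted closed form solves this recursion. Substituting the formula for $P(L_{n-2})$, the $P(L_0)$-terms assemble into $a^{n}P(L_0)$ and the coefficient of $z\,P(L_\infty)$ collapses via the identity $\frac{1-a^{2}}{a-a^{-1}}=-a$; equivalently one unrolls the recursion from $n=0$ and sums the geometric series $1+a^{2}+\dots+a^{\,n-2}=\frac{a^{n}-1}{a^{2}-1}$, then rewrites the result over the denominator $a-a^{-1}$. The case $n<0$ is handled in the same way. The step I expect to be the main obstacle is the diagrammatic bookkeeping: pinning down which of $L_{+},L_{-}$ equals $L_n$ (this depends on the crossing sign that Fig.~\ref{counter} fixes) and checking carefully that the oriented smoothing of a counter-directed crossing really does reduce the whole block to the $L_\infty$ tangle after Reidemeister~I. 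Once that one-step recursion is correctly established, the rest is a routine computation.
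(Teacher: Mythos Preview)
Your proof is correct and follows essentially the same route as the paper: base case $n=0$, then the skein relation applied at one crossing of the block to obtain the recursion $P(L_n)=a^{2}P(L_{n-2})-azP(L_\infty)$ (and its mirror for $n<0$), followed by the same algebraic substitution. The only cosmetic difference is that the paper runs the induction upward in $n$ and then remarks that reversing the step covers the negative branch, whereas you induct on $|n|$; the content is identical.
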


\begin{proof}
Proceed by induction on $n$.

1) For $n=0$ the assertion is trivially true.

2) Suppose it is true for $n-2$.
The skein relation (\ref{skein}) shows that $zP(L_\infty)=aP(L_{n-2})-a^{-1}P(L_n)$.
Substituting here the assumed formula for $P(L_{n-2})$,
we can express $P(L_n)$ as follows:
\begin{align*}
P(L_n)&=a^2P(L_{n-2})-zaP(L_\infty)\\
&=a^2\big(a^{n-2}P(L_0)+z{1-a^{n-2}\over a-a^{-1}}P(L_\infty)\big)-zaP(L_\infty)\\
&=a^nP(L_0)+z\big(a^2{1-a^{n-2}\over a-a^{-1}}-a\big)P(L_\infty)\\
&=a^nP(L_0)+z{1-a^n\over a-a^{-1}}P(L_\infty)\ .
\end{align*}
The positive branch of induction is thus proved.

3) Suppose the assertion holds for a certain value of $n$. Prove it for
the value $n-2$. To do so, it is enough to reverse the argument in the
previous item. This completes the proof of the proposition.
\end{proof}

\begin{zmch*} For even values of $n$ the fraction $(1-a^n)/(a-a^{-1})$
is actually a Laurent polynomial, namely, $-a-a^3-\dots-a^{n-1}$, if $n>0$,
and $a^{-1}+a^{-3}+\dots+a^{n+1}$, if $n<0$.
\end{zmch*}

\noindent
\begin{minipage}[h]{0.7\linewidth}
\begin{slds}
Let $T_{2,n}$ be the torus link with counter-directed strands 
(shown in the picture on the right). Then
$\displaystyle{P(T_{2,n})=z^{-1}a^n(a-a^{-1})+z{{1-a^n}\over{a-a^{-1}}}}$.
\end{slds}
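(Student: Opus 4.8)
The plan is to deduce this corollary directly from Lemma \ref{P_of_Ln} by identifying the torus link $T_{2,n}$ with a member of the family $L_n$. First I would observe that the block of $n$ counter-directed half-twists in Figure \ref{counter}.a--c, when closed up in the simplest possible way (i.e. connecting the two strands on the left to the two strands on the right without any further crossings), produces exactly the torus link $T_{2,n}$ with counter-directed strands. Thus $T_{2,n}$ is the link $L_n$ for this particular choice of the ``outside'' closure.

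Next I would evaluate the two constants $P(L_0)$ and $P(L_\infty)$ for this closure. With $n=0$ the twisted block disappears and the diagram becomes two disjoint unknotted circles, so $L_0$ is the two-component unlink; using the skein relation on a split unknotted circle gives $P(\text{2-component unlink}) = (a-a^{-1})z^{-1} = z^{-1}(a-a^{-1})$. For $L_\infty$ the fragment in Figure \ref{counter}.d reconnects the strands the other way, and the resulting diagram is a single unknotted circle, so $P(L_\infty)=1$. Substituting these two values into the formula of Lemma \ref{P_of_Ln},
\[
  P(T_{2,n}) = a^n\cdot z^{-1}(a-a^{-1}) + z\,\frac{1-a^n}{a-a^{-1}}\cdot 1
             = z^{-1}a^n(a-a^{-1}) + z\,\frac{1-a^n}{a-a^{-1}},
\]
which is exactly the claimed expression.

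The only real point requiring care — and the step I expect to be the main obstacle — is verifying the two base evaluations against the orientation and closure conventions fixed in Section \ref{reduc}, namely checking that the counter-directed block really does close to $T_{2,n}$ (rather than to its mirror or to a co-directed torus link) and that $L_0$ and $L_\infty$ are genuinely the $2$-component unlink and the unknot for that closure. Once the pictures are matched up correctly, the computation of $P$ for the unknot and the $2$-component unlink is standard (the latter follows from \eqref{skein} applied to a crossingless split circle, giving the factor $(a-a^{-1})z^{-1}$ per extra component), and the conclusion is immediate. I would also remark that, consistently with the earlier Remark, the second summand $z(1-a^n)/(a-a^{-1})$ is a genuine Laurent polynomial for even $n$, while the first summand carries the $z^{-1}$ reflecting the extra component of $T_{2,n}$ when $n$ is even.
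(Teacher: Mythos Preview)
Your proof is correct and follows essentially the same route as the paper: identify $T_{2,n}$ as a member of the family $L_m$ from Lemma \ref{P_of_Ln}, observe that $L_0$ is the two-component unlink with $P(L_0)=z^{-1}(a-a^{-1})$ and $L_\infty$ is the unknot with $P(L_\infty)=1$, and substitute. The paper's version is terser (it simply states these two values without deriving the unlink formula) and opens by noting that $n$ is even, but the argument is the same.
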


\end{minipage}
\begin{minipage}[h]{0.29\linewidth}
\includegraphics[width=\textwidth]{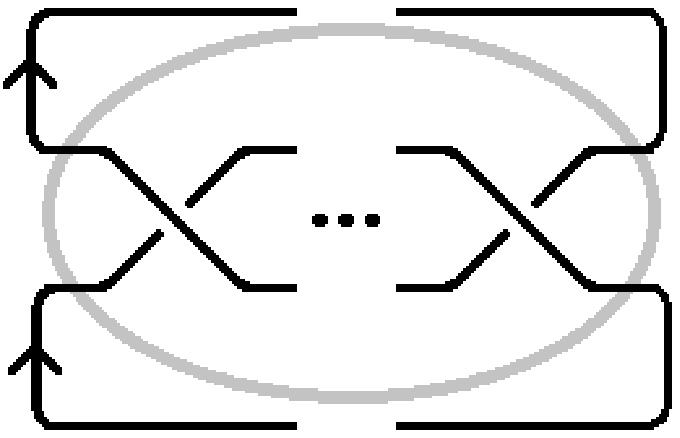}
\end{minipage}

\begin{proof}
Notice that $n$ is even. 
Consider the family of links $L_m=T_{2,m}$, where $m$ is an arbitrary
even number.
Outside of the grey ellipse all the links of this family are the same,
and inside it they look as shown on Fig \ref{even_fragm}.
Therefore, we fall under the assumptions of Lemma \ref{P_of_Ln},
and it only remains to note that
$P(L_0)=z^{-1}(a-a^{-1})$ and $P(L_{\infty})=1$.
\end{proof}

Particular cases of this Corollary for
$n=0,\ \pm 2,\ \pm 4$ give the well-known values of the HOMFLY polynomial
for the two unlinked circles, the Hopf link and the two (out of the total
four) versions of the oriented ``Solomon knot'', see Figure
\ref{homfly_torus}.

\begin{figure}[h]
\begin{center}
\begin{minipage}[h]{0.3\linewidth}
\center{\includegraphics[width=0.5\linewidth]{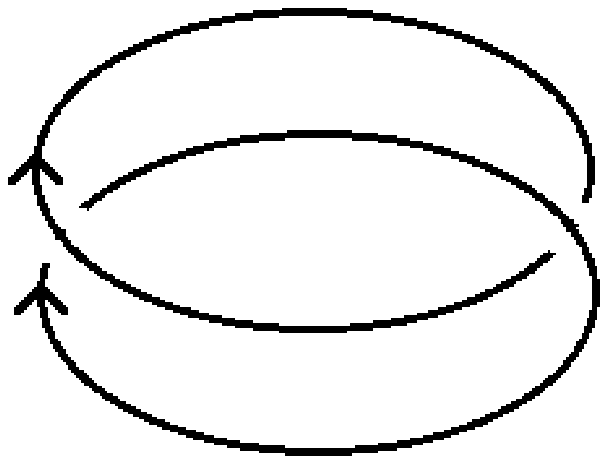}} $z^{-1}(a^3-a)-za$ \\
\end{minipage}\quad
\begin{minipage}[h]{0.25\linewidth}
\center{\includegraphics[width=0.5\linewidth]{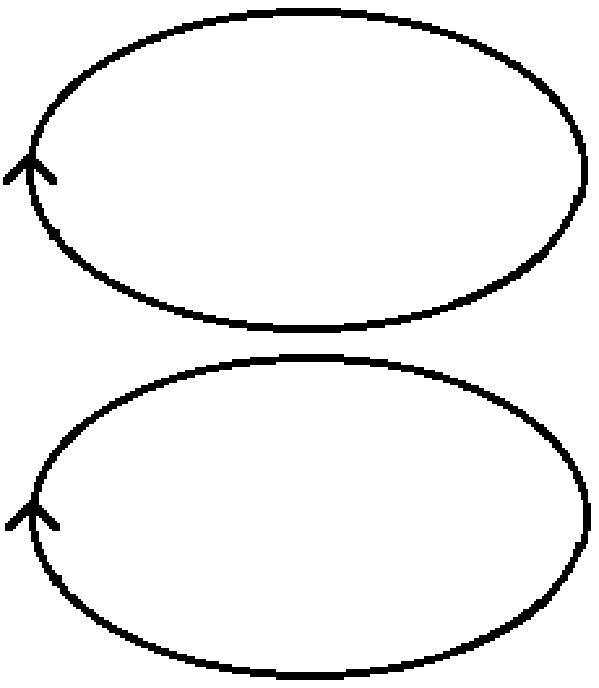}} $z^{-1}(a-a^{-1})$ \\
\end{minipage}\quad
\begin{minipage}[h]{0.3\linewidth}
\center{\includegraphics[width=0.5\linewidth]{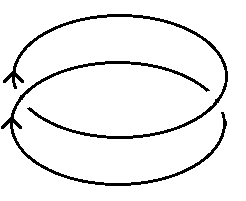}} $z^{-1}(a^{-1}-a^{-3})+za^{-1}$ \\
\end{minipage}

\vspace{3mm}

\begin{minipage}[h]{0.4\linewidth}
\center{\includegraphics[width=0.5\linewidth]{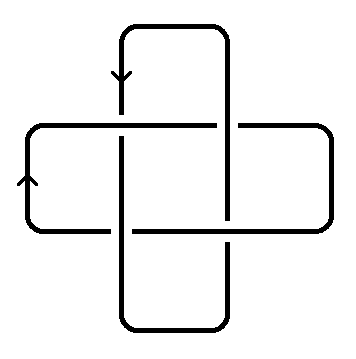}} $z^{-1}(a^5-a^3)-z(a+a^3)$ \\
\end{minipage}
\begin{minipage}[h]{0.4\linewidth}
\center{\includegraphics[width=0.5\linewidth]{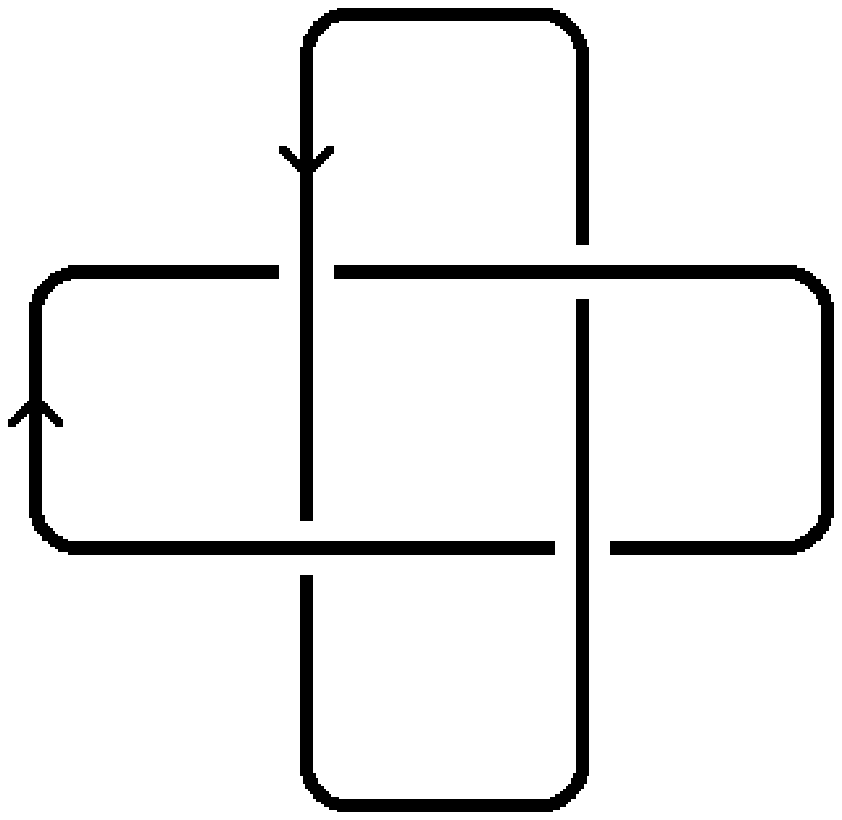}} $z^{-1}(a^{-3}-a^{-5})+z(a^{-1}+a^{-3})$ \\
\end{minipage}
\end{center}
\caption{HOMFLY polynomial of some torus links}
\label{homfly_torus}
\end{figure}

\section{Canonical orientation of rational links}
\label{canon}

\begin{lemma}\label{even_prod}
Suppose the numbers $p$ and $q$ are mutually prime and $|{p\over q}|<1$. 
The number ${p\over q}$ has a continued fraction expansion with non-zero 
even denominators if and only if the product $pq$ is even, and if such
an expansion exists, it is unique.
\end{lemma}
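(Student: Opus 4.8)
The plan is to work with the standard Euclidean-type algorithm that produces continued fractions, but run it so that at each step we are forced to pick the unique even denominator. Concretely, given a rational number $x$ with $0<|x|<1$, write $x = 1/(b+x')$ where $b$ is an integer and $x'$ is the "remainder"; the choice $b = b_1$ determines everything downstream, and the constraint that $b_1$ be even pins down $b_1$ uniquely, provided a legal choice exists at all. So the first step is the arithmetic lemma: for coprime $p,q$ with $0<|p/q|<1$, there is at most one even integer $b$ such that $q/p = b + (p'/q)$ with $|p'/q|<1$ and $p',q$ again coprime (automatically) and $p'q$ still even (this last is the invariant that must be checked to propagate). Uniqueness of the even $b$ at each step is immediate: two integers congruent to $q/p$ up to a quantity of absolute value $<1$ differ by at most $1$ in absolute value if we demand the remainder stay in $(-1,1)$, and among two consecutive integers exactly one is even. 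Hence uniqueness of the whole expansion follows by induction once existence is handled.

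For existence, I would set up the induction on $\max(|p|,|q|)=q$ (since $|p/q|<1$ forces $q>|p|$). The key computation: write $q = 2kp + r$ where $2k$ is the even integer closest-from-appropriate-side so that $r = q - 2kp$ satisfies $|r|<|p|$; this is possible precisely because among $q-2kp$ and $q-(2k\pm1)p$ one has absolute value $<|p|$, and we must check the even one does. Here is where the parity hypothesis $pq$ even enters: if $p$ is even, then $q$ is odd (coprimality), and $q - 2kp$ is odd for all $k$, so its absolute value is never $0$ and is $\le |p|-1<|p|$ for the right $k$ — fine; if $p$ is odd, then $q$ is even, and we need $r=q-2kp$ to be even and of absolute value $<|p|$, which works because $q$ even and $2kp$ even force $r$ even, and the even numbers in an interval of length $2|p|$ centered suitably include one of absolute value $<|p|$ — one has to be slightly careful that we don't land on $r=0$, but $r=0$ would force $p\mid q$, impossible since $\gcd(p,q)=1$ and $|p|\ge 1$, unless $p=\pm 1$, the base case. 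Then $p/q = 1/(2k + r/p)$, and I'd note $\gcd(r,p)=\gcd(q,p)=1$ and that $rp$ is even (since one of $r,p$ is even by the same parity bookkeeping), so the inductive hypothesis applies to $r/p$, whose "size" $\max(|r|,|p|)=|p|<q$ has strictly decreased.

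The base case is $p=\pm1$: then $p/q = \pm 1/q$ with $q$ even (parity hypothesis), and $\pm1/q = 1/(\pm q)$ is already a one-term even continued fraction $[\pm q]$; uniqueness here is clear since any expansion $[b_1,\dots]$ with $|1/q|<1$ reconstructs $b_1 = \pm q$ plus a remainder in $(-1,1)$ that must vanish. The converse direction — if an even expansion exists then $pq$ is even — I would get by reversing: from $p/q=[b_1,\dots,b_n]$ with all $b_i$ even, compute the numerator and denominator by the standard convergent recursion $h_k = b_k h_{k-1} + h_{k-2}$, $k_k = b_k k_{k-1}+k_{k-2}$, and check by induction mod $2$ that the pair $(p,q)$ is never $(\text{odd},\text{odd})$; with all $b_i$ even the recursion mod $2$ gives $h_k\equiv h_{k-2}$, $k_k\equiv k_{k-2}$, and chasing the initial conditions shows the parity pattern forces $pq$ even. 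The main obstacle I anticipate is not conceptual but bookkeeping: pinning down the sign conventions and the half-open vs. open interval for the remainder so that "closest even integer" is genuinely well-defined and the excluded case $r=0$ really only occurs in the base case — I'd want to state the remainder condition as $|p'/q|<1$ (strict) throughout and verify it is preserved, treating the degenerate possibilities $b_i=0$ separately or ruling them out by the coprimality of successive numerator/denominator pairs.
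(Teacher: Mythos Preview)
Your proposal is correct and follows essentially the same route as the paper: existence by induction on $|q|$, at each step choosing the unique even integer near $q/p$ and checking that the parity invariant $pq$ even is preserved; uniqueness from the observation that two even choices of $b_1$ with remainders in $(-1,1)$ differ by less than $2$ and hence coincide. Your handling of necessity via the convergent recursion $h_k\equiv h_{k-2}$, $k_k\equiv k_{k-2}\pmod 2$ is a cosmetic variant of the paper's direct induction on the length of the expansion.
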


\begin{proof}

1) Necessity: if $p/q=[b_1,b_2,\dots,b_n]$ with all $b_i$'s even, then
$pq$ is even.
We shall prove that by induction on the length $n$ of the
continued fraction. The induction base is evident.
Now,
$$
  {p\over q}=[b_1,b_2,\dots,b_n]={1\over b_1+[b_2,\dots,b_n]}
  ={1\over b_1+p'/q'}={q'\over b_1q'+p'}\ .
$$
By the induction assumption, one (and only one!) of $p'$ or $q'$ is even.
Since $b_1$ is even, it follows that either the numerator or the denominator 
of the last fraction is even, so their product is even and, since the numbers
$p'$ and $q'$ are mutually prime and ${p'}<{q'}$, this fraction is
irreducible and smaller than 1 by absolute value.

2) Sufficiency: if the product $pq$ is even, then the irreducible
fraction $p/q$ allows for a continued fraction with even denominators.

If $q=\pm2$, then the expansion clearly exists.
We proceed by induction on  $|q|$. 
Among the numbers $[{q\over p}]$ and $[{q\over p}]+1$ 
one is even, call it $b$. 
The number $b-{q\over p}$ can be written as an irreducible fraction 
${p'\over q'}$.
Note that $b$ cannot be 0, because $|q/p|>1$.
Then $|{p'\over q'}|<1$ and ${p\over q}={1\over{b+{p'\over q'}}}$, 
where we have $|q'|<|q|$. 
Similarly to the argument in the previous section we infer that
the product $p'q'$ is even. By the induction assumption ${p'\over q'}$ 
has a continued fraction expansion with even denominators.
This completes the proof of sufficiency.

We proceed to the proof of uniqueness, using induction on the length 
of the continued fraction.
For $p=1$ the assertion is trivial. 
Suppose that 
$$
  [b_1,b_2,\dots,b_n]=[c_1,c_2,\dots,c_n]
$$
where all the numbers $b_i$ and $c_i$ are even, and several last terms of
the sequence $c_i$ may be $\infty$ (which means that this sequence is
actually shorter than the first one).
Then
$$
  b_1+[b_2,\dots,b_n]=c_1+[c_2,\dots,c_n].
$$
Therefore,
$$
  |b_1-c_1|=\big|[b_2,\dots,b_n]-[c_2,\dots,c_n]\big| < 2
$$
But the number $|b_1-c_1|$ is even, hence $b_1=c_1$.

The lemma is proved.
\end{proof}

The continued fraction expansion with even denominators and the
corresponding natural diagram will be referred to
as the \textit{canonical expansion} of a rational number
and the \textit{canonical diagram} of a rational link (defined up to a
rotation, see Lemma \ref{dual}).

\begin{zmch*}
The parity of the denominator of a rational number is always opposite to 
the parity of the length of its even (canonical) continued fraction expansion.
That is, for knots the canonical expression is of even length, while for
links it is of odd length.
\end{zmch*}

Now we are in a position to define a canonical \textit{oriented} rational
link.

Let 
$$
{p\over q}=[b_1,b_2,\dots,b_n]
=\cfrac{1}{
b_1+\cfrac{1}{
b_2+\cfrac{1}{
\dots+\cfrac{1}{
b_{n-1}+\cfrac{1}{b_n}}}}},
$$
where $q$ and all $b_i$ are even.  
The diagram $D[b_1,\dots,b_n]$ taken with the positive orientation,
denoted by $D^+[b_1,\dots,b_n]$,
will be referred to as the \textit{canonical diagram} of the oriented link
$L^+(p/q)$.

We will use the canonical diagrams for the proof of the main theorem.
However, for this theorem to make sense, we must check that the oriented
link $L^+(p/q)$ does not depend on a particular choice of the continued
fraction for the rational number $p/q$ and, especially, that it does not
change when $p/q$ is changed by $\bar{p}/q$ where $p\bar{p}\equiv 1 \mod
2q$. We will prove these facts immediately.

\begin{lemma}\label{plusminus}
Suppose that $p/q=[b_1,\dots,b_n]=[c_1,\dots,c_m]$ where $b_i$ and  $c_i$ are
non-zero integers. Then the natural diagram $D^+[b_1,\dots,b_n]$
and $D^+[c_1,\dots,c_m]$  are oriented isotopic.
\end{lemma}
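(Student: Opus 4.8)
The plan is to reduce the statement to a short list of elementary transformations of the sequence of partial quotients and then to realise each of them by an explicit oriented isotopy of natural diagrams. First I would invoke the classical description of the ambiguity of a continued fraction (see \cite{KM}, or reprove it by reduction to a normal form): if $[b_1,\dots,b_n]$ and $[c_1,\dots,c_m]$ are finite continued fractions with non-zero integer entries and the same value $p/q$, then one passes from the first to the second by a finite chain of moves of two kinds, where intermediate sequences are allowed to contain zeros. The first, call it (Z), is \emph{zero absorption}, $[\dots,a,0,c,\dots]\leftrightarrow[\dots,a+c,\dots]$; it is meaningful because the two entries flanking a $0$ always carry the same letter in the word $A^{b_1}B^{b_2}\cdots$. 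The second, (F), is the \emph{flype move} $[\dots,x,y,\mathbf{z}]\leftrightarrow[\dots,x+1,-1,-y+1,-\mathbf{z}]$, together with its mirror $[\dots,x,y,\mathbf{z}]\leftrightarrow[\dots,x-1,1,-y-1,-\mathbf{z}]$, in which $\mathbf{z}$ is the (possibly empty) tail of the sequence and $-\mathbf{z}$ is that tail with every entry negated.

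Next I would translate (Z) and (F) into diagram isotopies. Move (Z) is immediate: a block raised to the power $0$ is the trivial tangle, so removing it merges the two flanking blocks (which carry the same letter) into one; the length of the word drops by $2$, hence the closure type of Figure \ref{closure} and the positive orientation are untouched, and the diagrams are planar-isotopic. Move (F) I would realise as a flype of the associated rational tangle: rotate by the angle $\pi$, about an axis lying in the plane of the diagram, the sub-tangle consisting of all crossings to the right of the block $X^x$. This rotation is an ambient isotopy of $\R^3$; it mirrors every crossing of the rotated part, which is what produces $\mathbf{z}\mapsto-\mathbf{z}$ and the new partial quotients, and since it changes $n$ by $1$ it interchanges the two closures of Figure \ref{closure}, something I would compensate by simultaneously sliding the outer closure arc around (again an isotopy). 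All of this should be read off directly from the fragments of Figures \ref{elements} and \ref{closure}.

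The hard part will be the bookkeeping of the \emph{positive} orientation under move (F). Since $D^+$ is by definition the natural diagram whose two leftmost vertical strands are co-directed, I must check that after the flype -- and the accompanying change of closure type -- the two leftmost strands of the resulting natural diagram are again co-directed, i.e.\ that the isotopy does not silently turn a positive orientation into a negative one. When the flype is carried out entirely to the right of the first block $A^{b_1}$ this is automatic, as the left end of the diagram is not disturbed; the cases I would have to examine by hand are a flype that involves $b_1$ itself and, in every instance, the reattachment of the closure arc when $n$ changes parity. For each I would trace the two strands through the local modification and verify that the clasp pattern at the far left, which is what pins down the relative direction of the two leftmost strands, is preserved. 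In the case of knots ($q$ odd) the two orientations of a natural diagram give isotopic knots, so there is nothing orientational to verify and the assertion is just Schubert's theorem; the real content is the two-component case. Granting these picture-level checks, I would concatenate along the chain of moves from the first step the oriented isotopies built in the second, obtaining the desired oriented isotopy $D^+[b_1,\dots,b_n]\simeq D^+[c_1,\dots,c_m]$.
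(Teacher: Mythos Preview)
Your approach is essentially the paper's: both arguments reduce the claim to a short list of elementary moves on the sequence of partial quotients and then realise each move by an oriented isotopy of natural diagrams, with the orientation check left at picture level. The paper organises the reduction slightly differently: rather than invoking a classical classification of continued-fraction expansions and connecting two arbitrary ones directly, it runs the greedy even-denominator algorithm of Lemma~\ref{even_prod} on a given expansion and observes that each step is one of the local substitutions $[S,s,t,T]\mapsto[S,s\pm1,\mp1,\ldots]$ (plus the boundary rule $[S,s,\pm1]\mapsto[S,s\pm1]$), so that termination is inherited for free from Lemma~\ref{even_prod} and every expansion is compared to the unique canonical one. Your version is in two respects more explicit than the paper's sketch: you isolate zero-absorption (Z) as a separate move, and your flype (F) correctly negates the entire tail~$\mathbf z$, which is what the continued-fraction identity actually requires (the paper's displayed rule leaves $T$ unchanged, which does not preserve the value of the fraction when $T\neq\emptyset$). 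On the genuinely delicate point---that the positive orientation survives each move, in particular when the parity of the length and hence the closure type of Figure~\ref{closure} changes---both the paper and your proposal assert this rather than write it out, so neither is more complete than the other there.
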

\begin{proof}
We will prove that every natural diagram $D^+[b_1,\dots,b_n]$
is oriented isotopic to the canonical (even) natural diagram.
To do so, we follow the induction argument used in Lemma \ref{even_prod} 
(sufficiency part). 
In fact, the induction step used there consists of one of
the two transformations on the sequence $\{b_1,b_2,\dots,b_n\}$:

(1) $[S,s,t,T]\longmapsto[S,s+1,-1,1-t,T]$, if $t>0$,

(2) $[S,s,t,T]\longmapsto[S,s-1,1,-1-t,T]$, if $t<0$,

\noindent
where $s,t$ are arbitrary integers and $S,T$ are arbitrary sequences.

The algorithm is to find the first from the left occurrence of an odd number
and apply one of these rules.  If $T=\emptyset$ and $t=\pm1$, then we use
the rule $[S,s,\pm1]\mapsto[S,s\pm1]$ instead.  Note that the situation when
all numbers $b_i$, $1\leq i\leq n-1$ are even, while $b_n$ is odd, is
impossible, because it corresponds to a knot rather than to a two-component
link.

The proof of Lemma \ref{even_prod} assures that, in this process, the 
denominator of the rational fraction monotonically decreases, and thus the
algorithm is finite.

Each step of the algorithm, when depicted on natural diagrams, shows that
during this process the equivalence of oriented links is preserved (even with 
numbering of components).
\end{proof}

The previous lemma justifies the notation $L^+(p/q)$.

\begin{lemma}\label{dual}
If $p\bar{p}\equiv 1 \mod 2q$, then the links $L^+(p/q)$ and $L^+(\bar{p}/q)$
are oriented isotopic.
\end{lemma}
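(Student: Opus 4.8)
The plan is to realize the replacement $p/q\mapsto\bar p/q$ by a rigid motion of the canonical natural diagram in $\R^3$, and to use Lemma \ref{plusminus} to forget about the choice of continued fraction. First I would reduce: the congruence $p\bar p\equiv1\pmod{2q}$ can only hold when $p$, and hence $\bar p$, is odd, so $q$ is even by Lemma \ref{even_prod}, both fractions define genuine two-component links, and their canonical (even) expansions $p/q=[b_1,\dots,b_n]$ and $\bar p/q=[c_1,\dots,c_k]$ have only even entries and odd length. (For $q$ odd no such $\bar p$ exists, so the statement is vacuous for knots; in any case for knots the two orientations give isotopic diagrams.) By Lemma \ref{plusminus} it is enough to prove that the canonical diagram $D^+[b_1,\dots,b_n]$ represents $L^+(\bar p/q)$, i.e.\ to exhibit a continued fraction for $\bar p/q$ whose natural diagram $D^+$ is isotopic to $D^+[b_1,\dots,b_n]$.

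For the continued-fraction bookkeeping I would put $\nu(b)=\left(\begin{smallmatrix}0&1\\1&b\end{smallmatrix}\right)$, so that $\nu(b_1)\cdots\nu(b_n)=\left(\begin{smallmatrix}a&p\\ c&q\end{smallmatrix}\right)$ with the number $p/q$ appearing in the second column. Each $\nu(b)$ is symmetric, so the reversed product $\nu(b_n)\cdots\nu(b_1)$ is the transpose $\left(\begin{smallmatrix}a&c\\ p&q\end{smallmatrix}\right)$, whose second column shows that the reversed continued fraction $[b_n,\dots,b_1]$ has the \emph{same} denominator $q$ and numerator $c$. From $\det=(-1)^n=-1$ (the length $n$ is odd) one gets $aq-pc=-1$, hence $pc=aq+1$; and a parity check — using that all $b_i$ and $q$ are even, together with Lemma \ref{even_prod} applied to the even-length truncation $[b_1,\dots,b_{n-1}]$ — shows that the corner entry $a$ is even, so $aq\equiv0\pmod{2q}$ and therefore $pc\equiv1\pmod{2q}$. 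Thus $c$ lies in the residue class of $\bar p$ modulo $2q$ and, being the numerator of a reduced fraction of absolute value $\le1$, it equals $\bar p$; in other words $[b_n,\dots,b_1]$ is a continued fraction for $\bar p/q$.

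It then remains to see that the natural diagram $D^+[b_n,\dots,b_1]$ is obtained from $D^+[b_1,\dots,b_n]$ by a rotation of $\R^3$. The braid word $A^{b_1}B^{b_2}A^{b_3}\cdots$ runs along the picture, and the half-turn of $\R^3$ that interchanges its two ends reverses the order of the twist blocks while keeping an $A$-block an $A$-block, turning the word into $A^{b_n}B^{b_{n-1}}A^{b_{n-2}}\cdots$, i.e.\ into the natural word of $[b_n,\dots,b_1]$; since the two lengths have the same parity (both odd), the closure of Figure \ref{closure} is of the same type, so after sliding the closure arcs back to standard position one recovers literally a natural diagram of $[b_n,\dots,b_1]$. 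A rotation of $\R^3$ lies in $SO(3)$: it mirrors no crossing, flips no exponent $b_i$, preserves the orientation of each component and, in particular, sends co-directed strands to co-directed strands — so it carries $D^+$ to $D^+$ and not to $D^-$. Combining the three steps with Lemma \ref{plusminus}, the diagrams for $L^+(p/q)$ and $L^+(\bar p/q)$ are rotation-equivalent, hence the links are oriented isotopic.

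The step I expect to be the real obstacle is the last one: verifying honestly, against the pictures in Figures \ref{elements}–\ref{pos_orient}, that the half-turn of the closed natural diagram is again a \emph{natural} diagram (matching up the two kinds of closure and the placement of the twist regions), and — the genuinely delicate point — that the \emph{positive} orientation is the one carried to the positive orientation. This is precisely what upgrades the classical Schubert congruence from $\pmod q$ (which controls only the unoriented link) to $\pmod{2q}$, so it cannot be bypassed; one has in particular to check that the pair of strands that becomes `leftmost' after the rotation is a pair whose co-directedness was already part of the data, so that no spurious reversal of orientation sneaks in.
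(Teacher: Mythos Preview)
Your approach is essentially the paper's: rotate the canonical diagram $D^+[b_1,\dots,b_n]$ about a vertical axis to obtain a natural diagram for $[b_n,\dots,b_1]$, and show that this reversed continued fraction equals $\bar p/q$. You fill in considerably more detail than the paper does---the matrix computation and the parity argument showing the congruence holds modulo $2q$ (rather than merely modulo $q$, as in the unoriented Schubert theorem) are only alluded to in the paper as ``easy to show by induction on $n$''.

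The one substantive difference is in the orientation bookkeeping. You try to argue directly that the rotation carries $D^+$ to $D^+$, and you correctly flag this as the delicate point requiring verification against the figures. The paper takes a slightly different tack: it observes that the rotation produces $D^+[b_n,\dots,b_1]$ \emph{with the orientation of both components reversed}, and then invokes the fact (cited from \cite{KM}) that simultaneous reversal of both components is a link equivalence for rational links. This sidesteps the pictorial check you identify as the obstacle, at the cost of an external reference. Your route is self-contained in principle but, as you acknowledge, needs honest verification; the paper's is shorter but imports a fact.

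One small correction: your claim that ``for $q$ odd no such $\bar p$ exists'' is not right---when both $p$ and $q$ are odd, $p$ is a unit modulo $2q$ and $\bar p$ certainly exists. What actually fails in that case is that $pq$ is odd, so by Lemma~\ref{even_prod} no canonical (all-even) expansion of $p/q$ is available and the argument via canonical diagrams does not apply. Since the lemma is phrased in terms of $L^+$, it is implicitly about two-component links, so this does not affect the substance.
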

\begin{proof}
Making the rotation of the canonical diagram $D^+[b_1,b_2,\dots,b_n]$ around
a vertical axis, we obtain the canonical diagram
$D^+[b_n,b_{n-1},\dots,b_1]$, and it is easy to show (by induction on $n$)
that these two continued fractions have the same denominators, and their
numerators are related as indicated in the statement of the lemma.  (Remind
that, for links, the number $n$ is odd).  We see that the two corresponding 
links are isotopic with the orientation of both components changed.  But the 
total change of orientation is a link equivalence (see \cite{KM}).
\end{proof}

In a canonical diagram of a rational link, due to the fact that all blocks
are of even length, the strands are everywhere counter-directed.
Therefore, Lemma \ref{P_of_Ln} can be applied recursively:
\begin{equation}\label{iterP}
\begin{aligned}
  P(D^+[b_1,\dots,b_n])=&\,a^{\eps_nb_n} P(D^+[b_1,\dots,b_{n-1},0])
  +z{{1-a^{\eps_nb_n}}\over{a-a^{-1}}} P(D^+[b_1,\dots,b_{n-1},\infty])\\
  =&\,a^{\eps_nb_n} P(D^+[b_1,\dots,b_{n-2}])
  +z{{1-a^{\eps_nb_n}}\over{a-a^{-1}}} P(D^+[b_1,\dots,b_{n-1}]),
\end{aligned}
\end{equation}
because the following two pairs of diagrams are equivalent as links:
$D^+[b_1,\dots,b_{n-1},0]=D^+[b_1,\dots,b_{n-2}]$ and 
$D^+[b_1,\dots,b_{n-1},\infty]=D^+[b_1,\dots,b_{n-1}]$. The sign
$\eps_n=(-1)^{n-1}$ comes from our convention of counting the number of twists
in the first and the second layers of a natural diagram (see Figure
\ref{elements} --- the powers of $A$ and $B$ correspond to odd and even 
values of $n$, respectively).

For a given sequence $[b_1,\dots,b_n]$ denote $x_n=P(D^+[b_1,\dots,b_n])$.
Then Equation \ref{iterP} can be rewritten as
\begin{equation}\label{iterPx}
  x_n = z{{1-a^{(-1)^{n-1}b_n}}\over{a-a^{-1}}}x_{n-1} 
      + a^{(-1)^{n-1}b_n}x_{n-2}
\end{equation}
which makes sense when $n>2$. Drawing the diagrams and applying skein
relation (\ref{skein}) for the cases $n=2$ and $n=1$, we can see that 
Equation (\ref{iterPx}) still holds for these values, if
we set $x_0=1$ and $x_{-1}=z^{-1}(a-a^{-1})$.

\section{Main theorem}

Our aim is to find a closed form formula for $x_n$ in terms of $a$ and $z$.
To do this, it is convenient to first consider a more general situation.

\begin{lemma}
\label{recursion}
Let $r_n$ and $l_n$ be elements of a certain commutative ring $R$.
Define recurrently the sequence $x_n$, $n\geq-1$, of polynomials
from $R[z]$ by the relation
$$
  x_n=zl_nx_{n-1} + r_nx_{n-2},\ n\geq 1,
$$
where $x_{-1}$ and $x_{0}$ are fixed elements of $R$. 
Let $C$ be the set of all integer sequences $c=\{c_1,c_2,\dots,c_l\}$
where $c_1>c_2>\dots>c_l$, $c_1=n$, $c_i-c_{i+1}=1\text{ or }2$,
$c_l=0\text{ or }-1$, and only one of the numbers 0 and $-1$ is present in
the sequence $c$ (that is, if $c_l=-1$, then $c_{l-1}\not=0$).
Then $x_n$ can be expressed as the following polynomial in $z$
with coefficients depending on the elements $l_i$, $r_i$ and the initial
conditions $x_0$, $x_{-1}$:
$$
  x_n=\sum_{c\in C} z^{k(c)} x_{c_l}
  \prod_{i\in\l(c)}l_{c_i} \prod_{i\in\r(c)}r_{c_i},
$$
where $\l(c)=\{i\mid c_i-c_{i+1}=1\}$, $\r(c)=\{i\mid c_i-c_{i+1}=2\}$
and $k(c)=|\l(c)|=\#\{i\mid c_i-c_{i+1}=1\}$.
\end{lemma}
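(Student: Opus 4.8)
The plan is to prove the formula by induction on $n\ge -1$, reading the recursion ``from the top'': the summand $x_n$ is split according to the first step of the lattice path encoded by a sequence $c\in C$. The base cases $n=0$ and $n=-1$ are immediate: $C$ then consists of a single sequence, $\{0\}$ respectively $\{-1\}$, for which $\l(c)=\r(c)=\emptyset$ and $k(c)=0$, so the right-hand side collapses to $x_0$ (respectively $x_{-1}$), matching the prescribed initial data.

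For the inductive step, fix $n\ge 1$, assume the formula for $n-1$ and for $n-2$, and write $C(m)$ for the index set attached to the value $m$. Since $c_1=n\ge 1>c_l$, every $c\in C(n)$ has length at least $2$, so its first difference $c_1-c_2$ is well defined and equals $1$ (then $c_2=n-1$) or $2$ (then $c_2=n-2$). This partitions $C(n)$ into two disjoint classes, and deleting the leading entry $c_1=n$ induces bijections of these classes with $C(n-1)$ and $C(n-2)$ respectively. I would verify that the defining constraints --- strict decrease, step sizes $1$ or $2$, termination at $0$ or $-1$, and presence of exactly one of the values $0,-1$ --- survive both the deletion of the first term and the prepending of $n$; the small values of $n$, where a shortened sequence may already be a one-term base sequence, are covered too.

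It then remains to follow the weights through these bijections. If $c\in C(n)$ begins with a size-$1$ step and $\tilde c\in C(n-1)$ is obtained from $c$ by deleting $c_1=n$, then $1\in\l(c)$, the sets $\l(\tilde c)$ and $\r(\tilde c)$ equal $\l(c)\setminus\{1\}$ and $\r(c)$ with all indices decreased by one, and $c_l$ is unchanged; hence the summand indexed by $c$ equals $z\,l_n$ times the summand indexed by $\tilde c$ in the formula for $n-1$, and summing over all such $c$ gives $z\,l_n\,x_{n-1}$ by the induction hypothesis. By the symmetric argument the sequences beginning with a size-$2$ step contribute, after deletion of $c_1=n$, exactly $r_n$ times the formula for $n-2$, namely $r_n\,x_{n-2}$. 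Adding the two contributions and applying the recursion $x_n=z\,l_n\,x_{n-1}+r_n\,x_{n-2}$ completes the induction. The only genuine work here is this last bookkeeping --- checking that $\l$, $\r$, $k$ and the terminal index $c_l$ transform precisely as claimed under removal of the leading term --- together with the small-$n$ boundary; everything else is a direct unwinding of definitions.
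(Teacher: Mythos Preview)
Your induction is correct and is essentially the same argument the paper gives: the paper phrases it as ``the formula describes the computational tree'' for the depth-two recurrence, with $l$-edges of length~1 and $r$-edges of length~2, and the sequences $c$ are exactly the root-to-leaf paths in that tree. Your splitting of $C(n)$ according to the first step $c_1-c_2\in\{1,2\}$ is precisely the branching at the root of this tree, so you have written out formally what the paper sketches pictorially.
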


\begin{proof}\strut

\begin{minipage}[h]{0.51\linewidth} 
Essentially, the written formula describes the computational tree for
the calculation of $x_n$. 
Note that the recurrence is of depth 2, that is, the element $x_n$
is expressed through $x_{n-1}$ and $x_{n-2}$. Therefore, the computational
tree is best represented as a layered tree where each layer matches the
$l_i$'s and $r_i$'s with the same $i$. We draw the $l$-edges (of length 1)
to the left and the $r$-edges (of length 2) to the right.
The exponent of $z$ for each directed path from the vertex
at level $n$ to a vertex at levels 0 or $-1$ in this tree
corresponds to the number of left-hand edges. Any path in such a tree is
uniquely determined by a sequence of levels $c$ with the above listed 
properties.
In the picture, you can see an example of such a tree for $n=5$.
\end{minipage}
\quad
\begin{minipage}[h]{0.39\linewidth}
\begin{flushright}
\includegraphics[width=\textwidth]{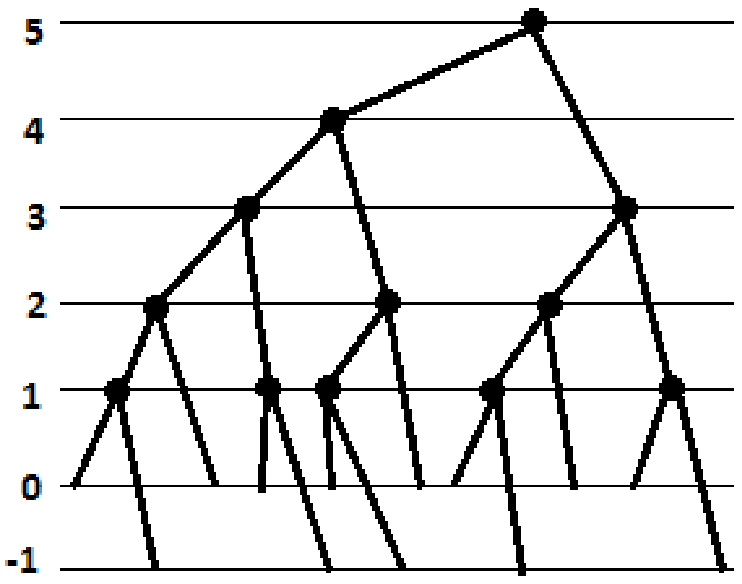}
\end{flushright}
\end{minipage} 
\smallskip

\end{proof}

To obtain the formula for the HOMFLY polynomial of an arbitrary rational
link $L^\pm(p/q)$, we combine Lemma \ref{recursion} with formula \ref{iterPx}.
For the sake of unification, we first make some preparations:
\begin{itemize}
\item
If $q$ is odd (that is, we deal with a knot) and $p$ is odd, too,
then we change $p$ to $p'=p-q$, if $p>0$, or to $p'=p+q$, if $p<0$.
Then $L(p/q)=L(p'/q)$, the numerator of the fraction becomes even, 
hence Lemma \ref{even_prod} applies and formula (\ref{iterPx}) is valid.
\item
If $q$ is even and the link is negative, then we use the property
$L^-(p/q)=L^+(p'/q)$, where $p'$ is computed by the same rule as above.
Below, we will simply write $L(p/q)$ instead of $L^+(p/q)$.
\end{itemize}

Now the main result reads:

\begin{theorem}
Suppose that $p$ is even and $q$ is odd or $p$ is odd and $q$ is even.
Let $[b_1,b_2,\dots,b_n]$ be the canonical continued fraction 
for the number $p/q$ (all numbers $b_i$ are even, positive or negative,
see Lemma \ref{even_prod}). Then 
\begin{equation}\label{main}
P(L(p/q))=
  \sum_{c\in C} z^{k(c)} x_{c_l}
  \prod_{i\in\l(c)}{1-a^{(-1)^{c_i-1}b_{c_i}}\over a-a^{-1}} 
  \prod_{i\in\r(c)}a^{(-1)^{c_i-1}b_{c_i}},
\end{equation}
where
\begin{itemize}
\item
$C$ is the set of all integer sequences $c=\{c_1,c_2,\dots,c_l\}$
with $c_1>c_2>\dots>c_l$, $c_i-c_{i+1}=1\text{ or }2$, $c_1=n$,
$c_l=0\text{ or }-1$, and only one of the numbers $0$ and $-1$ is present in
the sequence $c$ (that is, if $c_l=-1$, then $c_{l-1}\not=0$),
\item
$\l(c)=\{i\mid c_i-c_{i+1}=1\}$,
\item
$\r(c)=\{i\mid c_i-c_{i+1}=2\}$,
\item
$k(c)=|\l(c)|=\#\{i\mid c_i-c_{i+1}=1\}$,
\item
$x_0=1$ and $x_{-1}=z^{-1}(a-a^{-1})$.
\end{itemize}
\end{theorem}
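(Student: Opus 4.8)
Recall that in Section \ref{canon} we established that the canonical diagram has all blocks of even length, so the strands are everywhere counter-directed, and Lemma \ref{P_of_Ln} applies recursively to yield Equation (\ref{iterPx}):
$$
  x_n = z\,\frac{1-a^{(-1)^{n-1}b_n}}{a-a^{-1}}\,x_{n-1} + a^{(-1)^{n-1}b_n}\,x_{n-2},
$$
valid for all $n\ge 1$ provided we set $x_0=1$ and $x_{-1}=z^{-1}(a-a^{-1})$. This is exactly the abstract recurrence of Lemma \ref{recursion} with the commutative ring $R=\Z[a,a^{-1},z^{-1}]$ (or the field of rational functions, if one prefers not to worry about whether the quotient by $a-a^{-1}$ lands in a polynomial ring), the coefficient
$$
  l_n=\frac{1-a^{(-1)^{n-1}b_n}}{a-a^{-1}},\qquad r_n=a^{(-1)^{n-1}b_n},
$$
and the same initial data $x_0$, $x_{-1}$.

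Thus the first step is to invoke the hypothesis: since $p$ is even with $q$ odd, or $p$ is odd with $q$ even, we have $|p/q|<1$ (after the normalizations already described, and noting $pq$ even), so by Lemma \ref{even_prod} the canonical continued fraction $[b_1,\dots,b_n]$ with all $b_i$ even exists and is unique. The second step is to observe that $P(L(p/q))=P(D^+[b_1,\dots,b_n])=x_n$ in the notation fixed just before Section \ref{reduc} (using Lemmas \ref{plusminus} and \ref{dual} to know this is well-defined, and the preliminary reductions to reduce the case of a negative link or an odd-numerator knot to this one). The third step is to apply Lemma \ref{recursion} verbatim with the substitutions above; the index set $C$, the sets $\l(c)$, $\r(c)$, and the exponent $k(c)$ are literally the same in both statements, so the conclusion
$$
  x_n=\sum_{c\in C} z^{k(c)} x_{c_l}\prod_{i\in\l(c)}l_{c_i}\prod_{i\in\r(c)}r_{c_i}
$$
becomes formula (\ref{main}) upon substituting the values of $l_{c_i}$ and $r_{c_i}$.

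**There is essentially no obstacle here; the only point requiring a word of care is the ring in which Lemma \ref{recursion} is being applied,** since that lemma was stated for polynomials in $z$ over a commutative ring $R$, whereas here $x_{-1}=z^{-1}(a-a^{-1})$ and the $l_n$ involve division by $a-a^{-1}$. This is harmless: one may take $R$ to be the Laurent polynomial ring $\Z[a^{\pm1},z^{\pm1}]$ localized so that $a-a^{-1}$ is invertible, carry out the computation of Lemma \ref{recursion} there (its proof is a formal unwinding of the depth-two recurrence and is insensitive to which commutative ring is used), and afterwards note by the Remark following Lemma \ref{P_of_Ln} that each occurring factor $(1-a^{(-1)^{c_i-1}b_{c_i}})/(a-a^{-1})$ is in fact a genuine Laurent polynomial in $a$ (because every $b_{c_i}$ is even), so that the right-hand side of (\ref{main}) lies in the HOMFLY ring $\Z[a^{\pm1},z^{\pm1}]$, as it must. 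With this remark in place the theorem follows directly.
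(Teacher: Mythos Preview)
Your proposal is correct and follows exactly the paper's approach: the paper's own proof is the single line ``The proof was actually given above,'' meaning precisely the combination of the recurrence (\ref{iterPx}) with Lemma \ref{recursion} that you spell out. Your extra paragraph addressing the ring in which Lemma \ref{recursion} is applied (since $x_{-1}$ involves $z^{-1}$ and the $l_n$ involve a denominator $a-a^{-1}$) is a welcome point of care that the paper leaves implicit.
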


\begin{proof}
The proof was actually given above.
\end{proof}

\textbf{Example.}
The canonical expansion of the fraction $4/7$ is $[2,-4]$. We have $n=2$,
and there are three possibilities for the sequence $c$:

(1) $c=\{2,1,0\}$, then $\l(c)=\{2,1\}$, $k(c)=2$,
$\r(c)=\emptyset$, $c_l=0$,

(2) $c=\{2,1,-1\}$, then $\l(c)=\{2\}$, $k(c)=1$,
$\r(c)=\{1\}$, $c_l=-1$,

(2) $c=\{2,0\}$, then $\l(c)=\emptyset$, $k(c)=0$,
$\r(c)=\{2\}$, $c_l=0$,

\noindent
Then formula (\ref{main}) gives:

\begin{align*}
  P(L(4/7))&=z^2\cdot{1-a^2\over a-a^{-1}}\cdot{1-a^4\over a-a^{-1}}
          +z\cdot z^{-1}(a-a^{-1})\cdot{1-a^4\over a-a^{-1}}\cdot a^2 +a^4\\
          &=z^2(a^2+a^4)+(a^2+a^4-a^6)
\end{align*}

In formula \ref{main} one can, in principle, collect the terms with equal
powers of $z$. The formulation of this result is rather involved, and we
need first to introduce necessary notations.

Let $\alpha=p/q$ be a nonzero irreducible rational number between $-1$ and $1$.
We denote by $n=\nu(\alpha)$ the length of the canonical continuous fraction
for $\alpha$, and by $\alpha'$, the number $\alpha+1$, if $\alpha<0$,
and $\alpha-1$, if $\alpha>0$. Now, let
$$
\rho_k(\alpha)=
\sum_{\substack{{C\subseteq{{\overline{1,n}}},\ C\cap(C-1)=\O\ } \\ 
{|C|=(n-k)/2}}}\prod_{m\in C}a^{(-1)^{m+1}b_m}
\prod_{\substack{{m\in{\overline{1,n}}} 
\\ {m\notin C\cup(C-1)}}}(1-a^{(-1)^{m+1}b_m})
$$
where $\overline{1,n}=\{1,2,\dots,n\}$ and $C-1$ is understood as the set 
of all numbers $c-1$, where $c\in C$.

Then we have:

\begin{theorem}
Let $q$ be odd, that is, $L(\alpha)$ is a knot. Then:

1) If $p$ is even, then 
$$
P(L(\alpha))=\sum_{\substack{{{0\leq k\leq \nu(\alpha)}} \\
{k\equiv 0 \mod 2}}}z^{k}(a-a^{-1})^{-k}\rho_{k}(\alpha)
$$

2) If $p$ is odd, then 
$$
P(L(\alpha))=\sum_{\substack{{{0\leq k\leq \nu(\alpha')}} \\
{k\equiv 0 \mod 2}}}z^{k}(a-a^{-1})^{-k}\rho_{k}(\alpha')
$$

Let $q$ be even, that is, $L(\alpha)$ is a two-component link.
Then:

3) If the two components are counterdirected, then
$$
P(L^{+}(\alpha))=\sum_{\substack{{{-1\leq k\leq \nu(\alpha)}} \\ 
{k\equiv 1 \mod 2}}}z^{k}(a-a^{-1})^{-k}\rho_{k}(\alpha)
$$

4) If the two components are codirected, then
$$
P(L^{-}(\alpha))=\sum_{\substack{{{-1\leq k\leq \nu({\alpha'})}} \\ 
{k\equiv 1 \mod 2}}}z^{k}(a-a^{-1})^{-k}\rho_{k}({\alpha'})
$$
\end{theorem}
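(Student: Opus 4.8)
The plan is to derive the four statements directly from the previous Theorem (formula \eqref{main}) by reorganizing its sum according to the power of $z$ that appears in each summand. Recall that in \eqref{main} a term indexed by $c\in C$ carries $z^{k(c)}$ with $k(c)=|\l(c)|$, together with the factor $x_{c_l}$ where $c_l\in\{0,-1\}$. Since $x_0=1$ contributes no power of $z$ while $x_{-1}=z^{-1}(a-a^{-1})$ contributes $z^{-1}(a-a^{-1})$, the overall power of $z$ in a term is $k(c)$ when $c_l=0$ and $k(c)-1$ when $c_l=-1$; in the latter case there is an extra scalar factor $(a-a^{-1})$. The goal is to show that, once we fix the total power of $z$ to be some integer $k$ and sum over all $c\in C$ producing that power, the resulting coefficient of $z^k(a-a^{-1})^{-k}$ is exactly $\rho_k(\alpha)$ (or $\rho_k(\alpha')$ after the substitution $p\mapsto p'$ that was already explained before the previous Theorem).

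First I would establish a bijection between, on one hand, the pairs $(c,\text{data})$ implicit in \eqref{main} and, on the other hand, the subsets $C\subseteq\overline{1,n}$ with $C\cap(C-1)=\emptyset$ appearing in the definition of $\rho_k$. The key observation is that a lattice path $c=\{c_1>\dots>c_l\}$ with steps $1$ and $2$ from $n$ down to $0$ or $-1$ is completely determined by the set $\r(c)$ of indices where a step of size $2$ occurs — equivalently by which "rungs'' are skipped. A step of size $2$ from $c_i$ to $c_i-2$ means the index $c_i$ belongs to the $r$-set and the index $c_i-1$ is skipped (never visited). So the skipped indices are exactly $\{c_i-1 : i\in\r(c)\}$, i.e. $\r(c)-1$ in the notation of the statement, and these must be disjoint from $\r(c)$ itself: this is precisely the condition $C\cap(C-1)=\emptyset$ once we set $C=\{c_i : i\in\r(c)\}$ read as a subset of $\overline{1,n}$. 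The indices in $\l(c)$ are then exactly $\overline{1,n}\setminus(C\cup(C-1))$, up to the boundary bookkeeping near $0$ and $-1$. Under this correspondence the product $\prod_{i\in\r(c)}a^{(-1)^{c_i-1}b_{c_i}}$ becomes $\prod_{m\in C}a^{(-1)^{m+1}b_m}$ (noting $(-1)^{c_i-1}=(-1)^{c_i+1}$), and $\prod_{i\in\l(c)}(1-a^{(-1)^{c_i-1}b_{c_i}})/(a-a^{-1})$ becomes $(a-a^{-1})^{-k}\prod_{m\notin C\cup(C-1)}(1-a^{(-1)^{m+1}b_m})$, matching $\rho_k$ term by term.

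Next I would check the parity and range constraints on $k$. For a knot $\nu(\alpha)=n$ is even, and every path from $n$ to $0$ or $-1$ using steps of size $1$ and $2$ has a number of size-$1$ steps whose parity is forced: $n-c_l = k(c) + 2|\r(c)|$, so $k(c)\equiv n-c_l\pmod 2$. When $c_l=0$ the $z$-exponent is $k(c)\equiv n\equiv 0\pmod 2$; when $c_l=-1$ the $z$-exponent is $k(c)-1\equiv n-(-1)-1=n\equiv 0\pmod 2$. Hence for a knot only even powers of $z$ survive, in the range $0\le k\le n$, which is exactly the summation range in parts 1) and 2). For a link $n$ is odd, and the same arithmetic gives $z$-exponent $\equiv 1\pmod 2$ in the range $-1\le k\le n$ (the value $k=-1$ arising from the single path $\{n,n-1,\dots,0,-1\}$ of all size-$1$ steps when we land on $-1$ with $k(c)=0$, wait — more carefully, $k=-1$ means $c_l=-1$ and $k(c)=0$, which forces all steps to be size $2$, possible only when $n$ is odd ending at $-1$; the bookkeeping here is one of the fiddly points). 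This yields parts 3) and 4). The passage from $\alpha$ to $\alpha'$ in parts 2) and 4) is just the substitution $L(p/q)=L(p'/q)$, respectively $L^-(p/q)=L^+(p'/q)$, recorded in the two bulleted preparations before the previous Theorem, after which $p'$ is even and the canonical expansion of $\alpha'$ has length $\nu(\alpha')$.

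I expect the main obstacle to be the boundary bookkeeping near the bottom of the path — reconciling the two allowed endpoints $0$ and $-1$, the "only one of $0$ and $-1$ is present'' condition, and the extra factor $(a-a^{-1})$ coming from $x_{-1}$ — with the clean uniform formula $z^k(a-a^{-1})^{-k}\rho_k$. Concretely, when $c_l=-1$ the factor $x_{-1}=z^{-1}(a-a^{-1})$ must combine with $z^{k(c)}$ and with the $k(c)$ denominators $a-a^{-1}$ from the $\l$-factors to produce precisely $z^{k(c)-1}(a-a^{-1})^{-(k(c)-1)}$ times a polynomial; verifying that the index $0$ (which is in $\overline{1,n}$? no — $0\notin\overline{1,n}$) and the last step's contribution land in the right slot requires care about whether the last index $c_{l-1}$ above the endpoint contributes an $l$-factor or an $r$-factor and how $m=1$ (with $m-1=0\notin\overline{1,n}$) is treated in the exclusion $m\notin C\cup(C-1)$. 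Once this edge case is pinned down, the rest is the routine term-by-term matching described above, and the theorem follows.
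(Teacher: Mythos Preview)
Your approach is correct and, in fact, somewhat more direct than what the paper sketches. The paper does not give details, saying only that one ``first collect[s] the terms with equal powers of $z$ in the statement of Lemma~\ref{recursion} and then us[es] induction on $\nu(\alpha)$''. You instead bypass the induction entirely by setting up a bijection between paths $c\in C$ and subsets $C\subseteq\overline{1,n}$ with no two consecutive elements, via $C=\{c_i:i\in\r(c)\}$; this is a cleaner combinatorial argument that yields the formula in one stroke. What induction buys is that one need not track the bijection explicitly; what your bijection buys is a transparent reason why the formula has the shape it does.

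One remark on the point you flag as the main obstacle: the boundary bookkeeping near $0$ and $-1$ actually resolves with no residue. The dichotomy is simply $1\in C$ versus $1\notin C$: if $1\in C$ the path ends at $-1$, otherwise at $0$. In the first case $k(c)=|\overline{1,n}\setminus(C\cup(C-1))|=n-2|C|+1$ (since $0\in C-1$ falls outside $\overline{1,n}$), and the extra $z^{-1}(a-a^{-1})$ from $x_{-1}$ cancels exactly one unit against the $k(c)$ factors of $z(a-a^{-1})^{-1}$; in the second case $k(c)=n-2|C|$. Either way the net $z$-exponent is $n-2|C|=k$ and the net $(a-a^{-1})$-exponent is $-k$, so the two endpoint types merge seamlessly into the single sum defining $\rho_k$. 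Your hesitant parenthetical about $k=-1$ lands on the right answer: for odd $n$ the unique admissible $C$ of size $(n+1)/2$ is $\{1,3,\dots,n\}$, corresponding to the all-$2$-step path ending at $-1$.
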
 

The theorem can be proved by first collecting the terms with equal powers
of $z$ in the statement of Lemma \ref{recursion} and then using
induction on $\nu(\alpha)$; we do not give the details here.
Although Theorem 2 is in a sense more explicit than Theorem 1,
it is less practical; in particular, the formula of
Theorem 1 is better suited for programming purposes.

\section{Computer calculations}

The formula for $P(L(p/q))$ can be easily programmed.
The source code of the program, written by the second author and tested by
the first one, as well as the resulting table of HOMFLY polynomials for
rational links with denominators not exceeding 1000, are presented online
at \cite{Calc}. Below, we give a short excerpt of that big table which is
enough to know the polynomials of all rational knots and links with 
denominators no greater than 9, if one uses the following rules 
(see \cite{KM}):
\begin{enumerate}
\item
$P(L^+(-p/q))$ is obtained from $P(L^+(p/q))$ by the substitution 
$a\mapsto -a^{-1}$.
\item
The knots $L(p_1/q)$ and $L(p_2/q)$ are equivalent, if 
$p_1p_2\equiv 1\mod q$.
\item
The links $L^+(p_1/q)$ and $L^+(p_2/q)$ are oriented equivalent, if 
$p_1p_2\equiv 1\mod 2q$.
\end{enumerate}

In Table \ref{homflytab}, the first column (R) gives the notation
of the rational link (knot) as $L(p/q)$ (in the case of links, this means
$L^+(p/q)$), the second column (T) contains the standard notation of that
link (knot) from Thistlethwaite (Rolfsen) tables (see \cite{Atlas};
the bar over a symbol means mirror reflection, the star is for the change
of orientation of one component),
and the third column (H) is for the values of the HOMFLY polynomial.
Note that we list HOMFLY polynomials for both orientations of each
rational link, while the famous Knot Atlas \cite{Atlas} shows them for 
only one orientation of two-component links.

\begin{table}[htb]
$$\begin{array}{c|c|l}
R      &   T   &      H      \\
\hline
L({1\over2}) & L_2a_1 &  z^{-1}(a^3-a)-za\rule{0pt}{15pt} \\[1mm]
L({2\over3}) &  3_1  &  (2a^2-a^4)+z^2a^2 \\[1mm]
L({1\over4}) & L_4a_1 & z^{-1}(-a^3+a^5) + z(-a-a^3)\\[1mm]
L({3\over4}) & L_4a_1^* & z^{-1}(-a^3+a^5)+z(-3a^3+a^5)-z^3a^3\\[1mm]
L({2\over5}) &  4_1 & (a^{-2}-1+a^2)-z^2 \\[1mm]
L({4\over5}) & 5_1 & (3a^4-2a^6)+z^2(4a^4-a^6)+z^4a^4 \\[1mm]
L({1\over6}) & L_6a_3 & z^{-1}(-a^5+a^7)+z(-a-a^3-a^5) \\[1mm]
L({5\over6}) & L_6a_3^* & z^{-1}(a^7-a^5)+z(3a^7-6a^5)+z^3(a^7-5a^5)-z^5a^5\\[1mm]
L({2\over7}) & 5_2 & (a^2+a^4-a^6)+z^2(a^2+a^4) \\[1mm]
L({6\over7}) & 7_1 & (4a^6-3a^8)+z^2(10a^6-4a^8)+z^4(6a^6-a^8)+z^6a^6\\[1mm]
L({1\over8}) & L_8a_{14}^* & z^{-1}(-a^7+a^9)+z(-a-a^3-a^5-a^7)\\[1mm]
L({3\over8}) & \overline{L_5a_1} & z^{-1}(-a^{-1}+a)+z(a^{-3}-2a^{-1}+a)-z^3a^{-1}\\[1mm]
L({7\over8}) & L_8a_{14} & z^{-1}(a^9-a^7)+z(6a^9-10a^7)+z^3(5a^9-15a^7)
               +z^5(a^9-7a^7)-z^7a^7\\[1mm]
L({2\over9}) & 6_1 & (a^{-2}-a^2+a^4)+z^2(-1-a^2)\\[1mm]
L({8\over9}) & 9_1 & (5a^8-4a^{10})+z^2(20a^8-10a^{10})+z^4(21a^8-6a^{10})
               +z^6(8a^8-a^{10})+z^8a^8
\end{array}$$
\caption{HOMFLY polynomials of rational links with denominators $\le9$}
\label{homflytab}
\end{table}

\section{Concluding remarks}

\indent

1. As the Conway polynomial is a reduction of the HOMFLY polynomial, 
Theorem 1 gives a formula for the Conway polynomial of rational links
by the substitutions $a=1$, $z=t$ (the fraction $(1-a^n)/(a-a^{-1})$
is first transformed to a Laurent polynomial and becomes equal to 
$-n/2$).

2. Since the Jones polynomial is a reduction of the HOMFLY polynomial, 
Theorem 1 leads to a formula for the Jones polynomial of rational links
by the substitutions $a=t^{-1}$, $z=t^{1/2}-t^{-1/2}$.

3. The famous open problem whether a knot must be trivial if its Jones 
polynomial is 1, has a simple positive solution for rational knots.
Indeed, the value $|J(-1)|$ is equal to the determinant of the knot,
and the determinant of a rational knot is its denominator (see \cite{KM}).

4. \textit{Open problem}. Can one generalize formula \ref{main}
to \textit{all} links? The results of L.\,Traldi \cite{Tr} show that
it can be generalized to at least some non-rational links, although his
formula is less explicit than ours.

\section{Acknowledgements}

The authors are grateful to M.\,Karev who read the manuscript and indicated
several inaccuracies. We also thank S.\,Chmutov for pointing out the
relation of our investigations with papers \cite{Jaeg} and \cite{Tr},
and L.\,Traldi for valuable comments on his paper \cite{Tr}.

\end{document}